\newtheorem{prop}{Proposition}
\newtheorem{theorem}[prop]{Theorem}
\begin{document}

\title{The Sierpinski Triangle and The Ulam-Warburton Automaton}
\author{Tanya Khovanova, Eric Nie, and Alok Puranik}
\maketitle

\begin{abstract}
This paper is about the beauty of fractals and the surprising connections between them. We will explain the pioneering role that the Sierpinski triangle plays in the Ulam-Warburton automata and show you a number of pictures along the way.
\end{abstract}

\section{Introduction}

The Sierpinski triangle appeared in some Italian mosaics in the 13th century \cite{wikiST}. It was not named the Sierpinksi triangle then; in fact, Wac\l aw Sierpi\'{n}ski was not yet born. He was born in 1882 and described the triangle mathematically in 1915 \cite{Si}.

The Ulam-Warburton automaton was discovered much later. The first available description is in a paper by Ulam \cite{U}. We were not able to determine who Warburton is or find his works about the automaton. Later, in 1994, Richard Stanley rediscovered the automaton \cite{SC}. The automaton continued to fascinate scientists, and many other people wrote about it \cite{APS, PSX, S, W}.

The Ulam-Warburton automaton is a fractal, as is the Sierpinki gasket. But the connection between these two fractals is much deeper.

We stumbled upon this connection when we started studying the hexagonal analog of the Ulam-Warburton automaton. The project was a part of the MIT-PRIMES program and was suggested by Richard Stanley.

This paper is about the beauty of fractals and the surprising connections between them. We will explain the pioneering role that the gasket plays in the Ulam-Warburton automata and also show you many pictures.

\section{The Ulam-Warburton Automaton}

The Ulam-Warburton automaton grows on a square grid. It starts with a single cell, that is one unit square of the grid. A new cell is born if it is adjacent to exactly one live cell. In this particular automaton two cells are considered adjacent if they share a side. Live cells never die. Figure~\ref{fig:SquareFirst} shows the initial configuration and the live cells born during the next five generations.

\begin{figure}[htbp]
\centering
$
\begin{array}{ccc}
\includegraphics[height=25mm]{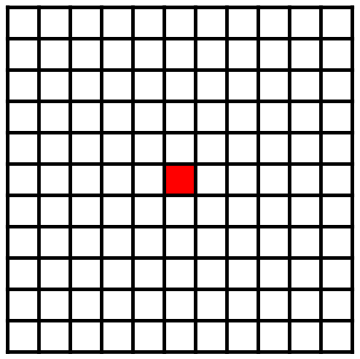} &
\includegraphics[height=25mm]{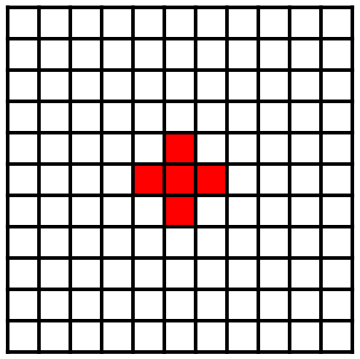} &
\includegraphics[height=25mm]{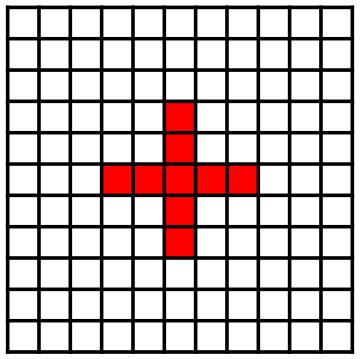}\\ 
\includegraphics[height=25mm]{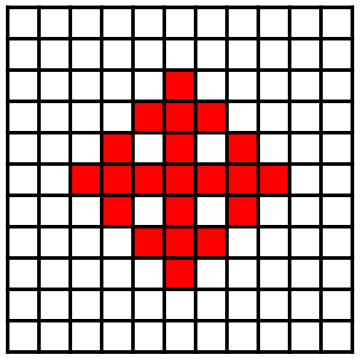} & 
\includegraphics[height=25mm]{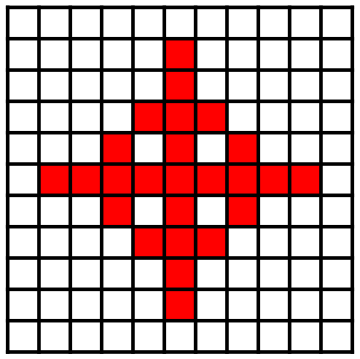} &
\includegraphics[height=25mm]{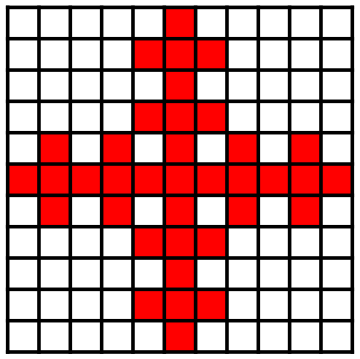}
\end{array}$
\caption{First generations of the Ulam-Warburton automaton}\label{fig:SquareFirst}
\end{figure}

The population of live cells becomes more elaborate with every generation. Figure~\ref{fig:square13-15} shows the set of live cells at generations 13 and 15.

\begin{figure}[htbp]
\centering$
\begin{array}{cc}
\includegraphics[height=50mm]{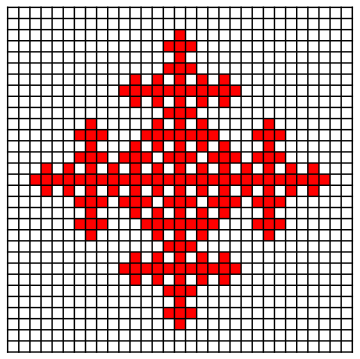} &
\includegraphics[height=50mm]{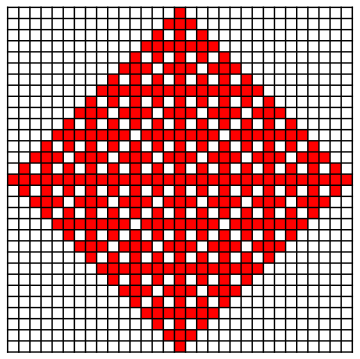}
\end{array}$
\caption{Generations 13 and 15 of the Ulam-Warburton automaton}\label{fig:square13-15}
\end{figure}

But where is the Sierpinski triangle? These pictures look squary while the triangle is triangular. The Sierpinski triangle is sometimes also called the Sierpinski sieve or the Sierpinski gasket. Sieve or gasket, the basic shape is still a triangle, as shown in Figure~\ref{fig:gasket}.

\begin{figure}[htbp]
\centering
\includegraphics[height=40mm]{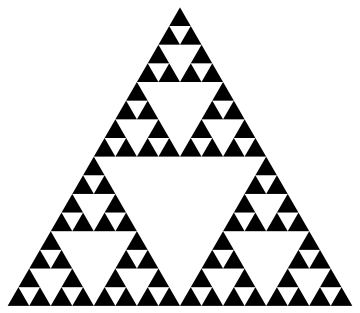}
\caption{Sierpinski gasket}\label{fig:gasket}
\end{figure}

The gasket is composed of triangles and looks triangular. However, we can imagine that the gasket has generations the same way the automaton does. We can consider the gasket row by row, with each row corresponding to a generation. The top row, row 0, has a single small triangle. 
Figure~\ref{fig:gasket13} shows the Sierpinski triangle after 13 generations. It looks unfinished and asymmetrical. The gasket looks like a complete triangle only for generations $2^k-1$. Now we see the first similarity: the Ulam-Warburton automaton looks like a complete square balancing on its corner only for the same generations $2^k-1$.

\begin{figure}[htbp]
\centering
\includegraphics[height=35mm]{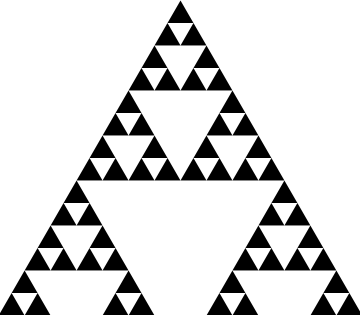}
\caption{Sierpinski gasket after 13 generations}\label{fig:gasket13}
\end{figure}

Before searching for the gasket in the Ulam-Warburton automaton let us try to generalize the automaton to the hexagonal grid.

\section{The Hex-Ulam-Warburton Automaton}

Our new automaton lives on the hexagonal grid, but it still follows the same rules. Again, the automaton starts with a single cell. A new cell is born if it is adjacent to exactly one live cell, and a live cell never dies. We call this automaton the Hex-Ulam-Warburton automaton, or Hex-UW automaton for short. Figure~\ref{fig:hexFirst} shows the initial configuration and the first five generations of the Hex-UW automaton. For disambiguation, we will sometimes call the original Ulam-Warburton automaton the Square-Ulam-Warburton automaton, or Square-UW automaton for short.

\begin{figure}[htbp]
\centering
$\begin{array}{ccc}
\includegraphics[height=25mm]{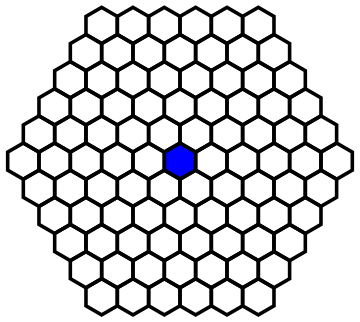} &
\includegraphics[height=25mm]{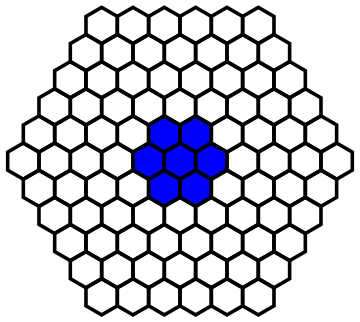} &
\includegraphics[height=25mm]{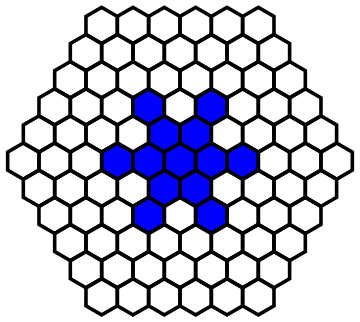}\\ 
\includegraphics[height=25mm]{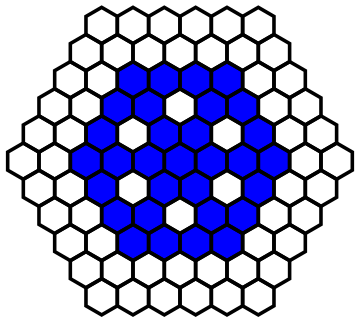} & 
\includegraphics[height=25mm]{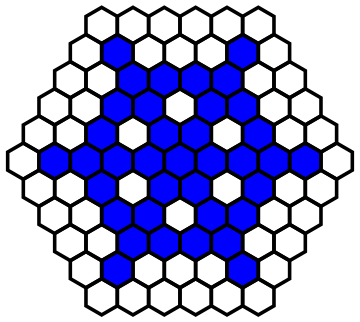} &
\includegraphics[height=25mm]{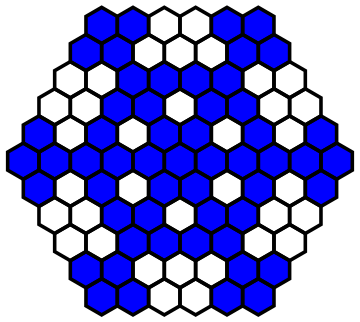}
\end{array}$
\caption{First generations of Ulam-Warburton-Hex Automaton}\label{fig:hexFirst}
\end{figure}

Similar to the Square-UW automaton the later generations of the Hex-UW automaton produce a more intricate picture. Figure~\ref{fig:hex13-15} shows generations 13 and 15.

\begin{figure}[htbp]
\centering$
\begin{array}{cc}
\includegraphics[height=50mm]{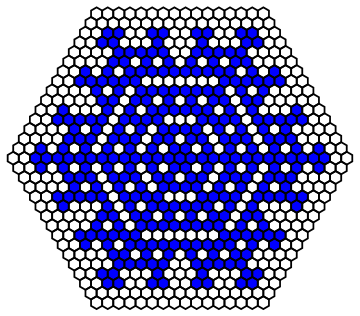} &
\includegraphics[height=50mm]{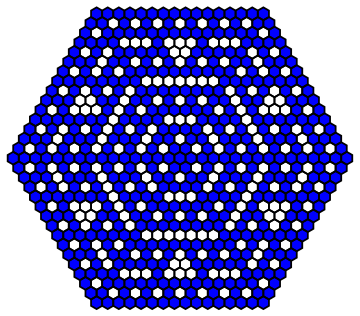}
\end{array}$
\caption{Generations 13 and 15 of the Hex-UW automaton}\label{fig:hex13-15}
\end{figure}

Like the Square-UW automaton, only generations $2^k-1$ look like a complete hexagon. The hexagon consists of 6 triangles, and each has a fractal structure reminiscent of the Sierpinski gasket. At generation 31, the gasket is even more visible, as in Figure~\ref{fig:hex31}, where we removed the background grid. But we still want to keep you in suspense about the role of the gasket and discuss the symmetries of our automata first.

\begin{figure}[htbp]
\centering
\includegraphics[height=50mm]{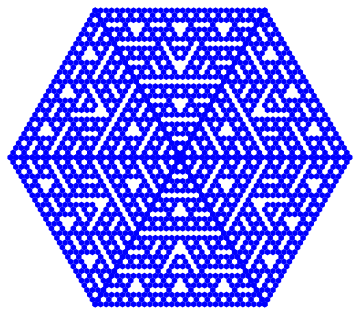}
\caption{Generation 31 of the Hex-UW automaton}\label{fig:hex31}
\end{figure}

\section{Symmetries}

Both the Square-UW and Hex-UW automata have symmetries because the initial position is symmetric and growth rules respect the symmetry. A square has 4 reflection lines and a hexagon has 6 reflection lines. The same lines passing through the initial cell are the lines of symmetry for all generations.

The symmetry considerations can be used to prove the following proposition that is true for both square and hex automata.

\begin{prop}
The cells whose centers lie on the line of reflectional symmetry which passes through the corners of the initial cell are never born.
\end{prop}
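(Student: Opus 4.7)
The plan is to argue by contradiction, using the fact that the live configuration at every generation is invariant under reflection through the line $L$ through the corners of the initial cell, combined with the geometric observation that no edge-neighbor of a cell centered on $L$ is itself on $L$. Together these two facts force the number of live neighbors of any on-$L$ cell to be even, contradicting the birth rule that demands exactly one live neighbor.

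First I would record the reflection invariance. The single initial cell is symmetric under reflection through $L$, and the birth rule is invariant under any grid isometry; so a routine induction on the generation shows that the live set is symmetric under reflection through $L$ at every stage. This is really just an instance of the general symmetry principle stated at the beginning of Section 4, so I would invoke it rather than reprove it.

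Next I would verify the local geometric claim in each grid. For the square grid, with coordinates chosen so that $L$ is the line $y = x$, a cell centered on $L$ sits at some $(k,k)$ and its four edge-neighbors lie at $(k\pm 1, k)$ and $(k, k\pm 1)$; none of these points lies on $L$, and reflection through $L$ pairs $(k+1,k)$ with $(k,k+1)$ and $(k-1,k)$ with $(k,k-1)$. For the hex grid, an analogous but slightly more notationally involved check (placing the initial hexagon with a pair of opposite vertices on the $x$-axis and using the standard basis for the hex lattice) shows that the six edge-neighbors of an on-$L$ hexagon split into three reflection pairs and that none of them is on $L$, because the vertical offsets from an on-$L$ center to its six neighbors are all nonzero. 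This geometric step is the main place where some care is needed, since one must fix a coordinate convention and see it through; I would do the hex computation once and reuse the setup later.

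To conclude, suppose for contradiction that some cell $c$ centered on $L$, other than the initial cell, is born at some generation $t$. By the previous two steps, the set of live neighbors of $c$ at generation $t-1$ is closed under a fixed-point-free involution, so its cardinality is even. But the birth rule requires this cardinality to equal one, a contradiction; hence no cell on $L$ other than the initial one is ever born.
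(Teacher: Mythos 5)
Your proposal is correct and follows essentially the same route as the paper: both arguments use the reflection symmetry of the configuration to pair the neighbors of an on-line cell, concluding that such a cell always has an even number of live neighbors and so can never satisfy the exactly-one-live-neighbor birth rule. You simply make explicit the details the paper leaves implicit (the induction establishing symmetry, and the coordinate check that the pairing is fixed-point-free in each grid).
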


\begin{proof}
The neighbors of each cell on this line can be partitioned into symmetric pairs. This implies that the cells on these lines always have an even number of living neighbors at every point in time.
\end{proof}

\section{The Sierpinski gasket}

We promised to find the gasket in these automata. Let us divide the square grid into four quadrants originating at the initial cell, so that the overlap of a complete square at generation $2^k-1$ resembles a triangle. Similarly, we can divide the hexagonal grid into six parts.  We call each triangle at population $n$ a \textit{slice}. Thus the square automaton is divided into 4 slices and the hex automaton into 6 slices.

We can draw the gasket in each slice of the automaton. In Figure~\ref{fig:gasketInUWHex}, we marked the Sierpinski gasket with dots using the hex automaton as the background in the left picture. This way you can see that the dots are always in blue cells. In the right picture, the cells of the Sierpinski gasket are represented as solid black hexagons to emphasize the gasket structure. The figure illustrates 15 generations of the Hex-UW automaton and the Sierpinski triangle. The pictures help us formulate our first theorem which we will prove later.

\begin{figure}[htbp]
\centering$
\begin{array}{cc}
\includegraphics[height=50mm]{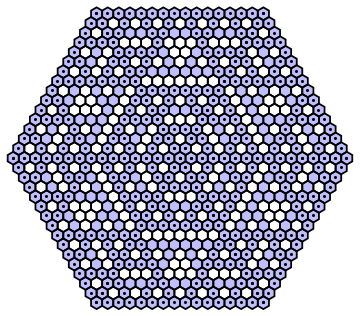} &
\includegraphics[height=50mm]{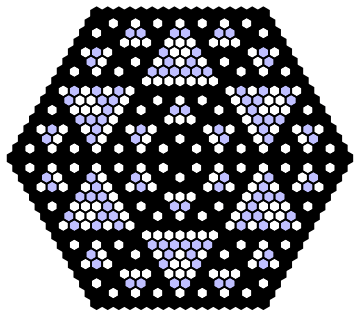}
\end{array}$
\caption{The Sierpinski gasket in the Hex-UW automaton}\label{fig:gasketInUWHex}
\end{figure}

\begin{theorem}
The Hex-UW automaton up to generation $n$ contains the Sierpinski gasket up to generation $n$.
\end{theorem}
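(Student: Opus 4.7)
The plan is strong induction on $n$ keyed to the milestones $n = 2^k - 1$, at which the Hex-UW is a complete hexagon and the gasket is a complete triangle. Both objects have a clean self-similar recursion: the Sierpinski gasket $G_{k+1}$ of $2^{k+1}$ rows is three copies of $G_k$ placed at the three corners of a doubled triangle, and I would show that the Hex-UW has an analogous recursion in which six copies of itself appear around a completed hexagon.

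The central structural lemma I would prove first is: at generation $2^k - 1$, the Hex-UW forms a closed hexagonal region $H_k$; at generation $2^k$, exactly six new ``tip'' cells are born, one at each outer corner of $H_k$; and for the subsequent $2^k - 1$ generations the six tips grow as independent, non-interacting copies of the Hex-UW started from a single cell. The argument is local neighbor-counting. An external cell touching the middle of an edge of $H_k$ has two $H_k$-neighbors and so cannot be born, while a corner-adjacent cell has exactly one, producing precisely the six tips. A candidate cell on the inner side of a growing sub-hexagon is adjacent to both $H_k$ and the sub-hexagon, so it has at least two live neighbors and cannot be born. Two adjacent sub-hexagons remain disjoint during the $2^k - 1$ growth phase because the distance between their initial tips exceeds twice the maximum growth radius, and the ``gap'' cells along the inter-slice symmetry lines are ruled out by the Proposition in Section 4: their neighbors always come in symmetric pairs, so the live count is always even.

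Granted the structural lemma, the inductive step is a matching of recursions. Inside one slice at generation $2^{k+1} - 1$, the relevant triangular region decomposes into an inner sub-triangle sitting inside $H_k$ and two outer sub-triangles sitting inside the two sub-hexagons adjacent to that slice. By the inductive hypothesis applied to $H_k$ and to each sub-hexagon copy (which, by the lemma, has evolved for exactly $2^k - 1$ generations as a fresh Hex-UW), each sub-triangle contains $G_k$; their union is $G_{k+1}$. For a general $n$ with $2^k \le n < 2^{k+1}$, the gasket at row $n$ is the inner $G_k$ plus partial growth of the two outer sub-gaskets at level $n - 2^k$, and the inductive hypothesis applied to each sub-hexagon (which has grown for $n - 2^k$ generations) supplies the containment. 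The main obstacle is the structural lemma, specifically the six-fold independence claim; cleanly proving it requires combining the symmetry Proposition with a careful case analysis of neighbor counts along the boundary of $H_k$ and in the gaps between adjacent sub-hexagons, plus a verification that the ``inward'' part of each sub-hexagon's would-be growth is consistently blocked by $H_k$.
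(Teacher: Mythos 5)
Your approach---a self-similar recursion on the milestone generations $2^k-1$, matching the three-copy recursion of the gasket against a six-copy recursion of the automaton---is entirely different from the paper's, and unfortunately its central structural lemma breaks on the hexagonal grid. The claim that the six tips ``grow as independent, non-interacting copies of the Hex-UW started from a single cell'' fails in two ways. First, the sub-growths are not disjoint: in axial coordinates the tips born at generation $2$ are $2e_1=(2,0)$ and $2e_2=(0,2)$, at hex distance $2^k=2$ from each other, while each grows to radius $2^k-1$; already at generation $3$ the child $(2,1)$ of the first tip and the child $(1,2)$ of the second tip are adjacent cells. Your distance estimate (``the distance between their initial tips exceeds twice the maximum growth radius'') compares $2^k$ against $2(2^k-1)$ and goes the wrong way for all $k\ge 1$; the inequality you need is false, and the sub-growths genuinely block each other's cells (e.g.\ $(2,2)$ is suppressed by having live neighbors in two different sectors). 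Second, a tip is not a fresh patriarch: $(2,0)$ has exactly $3$ children, whereas a fresh Hex-UW seed has $6$, so the inductive hypothesis about ``the Hex-UW started from a single cell'' does not apply to the sub-structures without a further (nontrivial) lemma that the outward half of a blocked copy agrees with the outward half of a free copy. This is precisely where the $0/1/2/3$-children behavior of the hex automaton, noted in Section~6, makes the hexagonal case messier than the square one.

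The paper avoids all of this by working cell-by-cell rather than block-by-block. It defines a \emph{pioneer} as a cell born in generation $n$ at Manhattan distance exactly $n$ from the patriarch, and proves (Theorem~\ref{thm:sierpinski}) by induction on the generation that the pioneers in one slice are exactly the Sierpinski gasket: a candidate cell at Manhattan distance $n+1$ has at most two neighbors closer to the patriarch, so its birth rule degenerates to ``born iff exactly one inward neighbor is alive,'' which is the mod-$2$ Pascal recurrence defining the gasket. Since pioneers are live cells, containment follows immediately, with no need to control the global shape of the automaton or the interaction between sectors. If you want to salvage your recursion, you would first have to prove the interaction between adjacent sectors never destroys the gasket cells, which essentially forces you back to a local neighbor-count argument of the paper's kind.
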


Now we can go back to the square grid and repeat the process in each of the four slices of the square as in Figure~\ref{fig:gasketInUW} which represents 15 generations. In the figure to the left, the cells of the Sierpinski triangles are represented as dots, so the background of the Square-UW automaton can be seen clearly. In the figure to the right, the cells of the gasket are black for emphasis.

\begin{figure}[htbp]
\centering$
\begin{array}{cc}
\includegraphics[height=50mm]{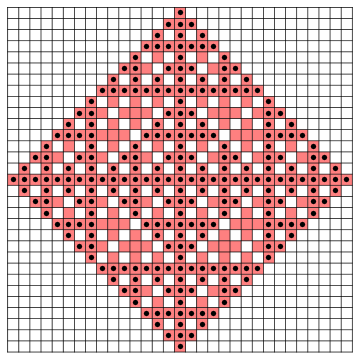} &
\includegraphics[height=50mm]{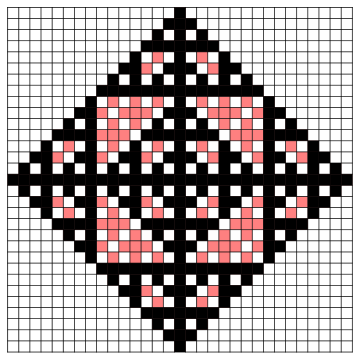}
\end{array}$
\caption{The Sierpinski gasket in the Square-UW automaton}\label{fig:gasketInUW}
\end{figure}

The is the square analog of the hex theorem above.

\begin{theorem}
The Square-UW automaton up to generation $n$ contains the Sierpinski gasket up to generation $n$.
\end{theorem}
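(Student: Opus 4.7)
The plan is to mirror the approach one would use for the Hex-UW theorem: exploit the initial configuration's symmetry to reduce to one slice, and then run an induction on generation that turns on Pascal's identity modulo $2$.

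By reflectional symmetry of the initial configuration and the growth rule, it suffices to prove the theorem in one of the four quadrants; I would take the closed top-right quadrant $\{(x,y) : x, y \ge 0\}$, whose intersection with the complete diamond at generation $2^k-1$ is a right triangle with legs on the axes. Place the Sierpinski gasket in this slice with apex at the initial cell $(0,0)$, so that the gasket cell at row $m$ and column $j$ (a position where $\binom{m}{j}$ is odd) identifies with the automaton cell $(m-j, j)$, and gasket row $m$ lies on the anti-diagonal $x + y = m$.

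The core step is an induction on $m$ which I would strengthen to the claim: at generation $m$, the live cells on the anti-diagonal $x+y = m$ in this quadrant are exactly gasket row $m$. For the inductive step, fix a cell $C = (m-j, j)$. Two elementary facts drive the argument. First, $C$ is at Manhattan distance $m$ from the origin, so it cannot be live before generation $m$, and it is born at generation $m$ if and only if it has exactly one live neighbor at generation $m-1$. Second, its four grid-neighbors sit on the anti-diagonals $m \pm 1$, and the two on anti-diagonal $m+1$ are at Manhattan distance $m+1$, too far to be live at generation $m-1$. The only candidates are therefore the two cells at row $m-1$ in columns $j-1$ and $j$, and by the inductive hypothesis these are live exactly when they are gasket cells. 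Thus the number of live neighbors of $C$ equals the number of odd entries among $\binom{m-1}{j-1}$ and $\binom{m-1}{j}$, which by Pascal's identity equals $1$ precisely when $\binom{m}{j}$ is odd -- exactly when $C$ is a gasket cell.

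The main obstacle consists of boundary cases rather than any real conceptual difficulty. For cells on the coordinate axes, one of the two candidate gasket-parents formally falls outside the quadrant, and I would verify that its actual grid-partner sits on anti-diagonal $m+1$ (so it does not contribute to the count), consistent with the convention $\binom{m-1}{-1} = \binom{m-1}{m} = 0$ in Pascal's identity. For cells on the diagonal $y = x$, the earlier proposition already forbids birth, and I would note compatibility with the gasket by observing that $\binom{2k}{k}$ is even for every $k \ge 1$, so these positions are not gasket cells anyway. With these small checks in place, the strengthened induction immediately yields the theorem, since each gasket cell at row $m \le n$ is then born at generation exactly $m$ and hence lives in the Square-UW automaton up to generation $n$.
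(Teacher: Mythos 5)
Your proof is correct and follows essentially the same route as the paper: your strengthened inductive claim (the live cells on the anti-diagonal $x+y=m$ at generation $m$ are exactly gasket row $m$) is precisely the paper's Theorem~\ref{thm:sierpinski} that the pioneers in one slice form the Sierpinski gasket, proved there by the same induction combining the Manhattan-distance bound on which neighbors can already be live with Pascal's identity modulo $2$. Your write-up is merely more explicit about coordinates and the boundary cases on the axes and on the diagonal $y=x$, but the underlying argument is the same.
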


 We will prove the theorems later; in addition to proving them, we would like to gain an understanding of the role that the gasket plays in the automata, but first we will discuss family trees. 
 
\section{Family}

We call the initial cell \textit{the patriarch}. The patriarch is generation 0 of both automata. The neighbors of the patriarch that are born in the first step are generation 1, and new cells that are born at step $n$ are generation $n$. There is a more complicated structure to the cells than just generations. Each cell, other than the patriarch, has exactly one parent. Indeed, each living non-patriarch cell was born because it was adjacent exactly to one other cell. Define that other cell, which was the progenitor of this cell, to be its \textit{parent}. Similarly, if cell $a$ is a parent of cell $b$, then $b$ is \textit{a child} of $a$.

\begin{prop}
If a cell is born in generation $n$, its parent is born in the previous generation, or generation $n-1$.
\end{prop}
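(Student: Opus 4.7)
The plan is to argue by contradiction, leveraging the two defining features of the automaton: a cell is born precisely when it has exactly one live neighbor, and live cells never die. Let $b$ be a cell born in generation $n$, and let $a$ denote its parent, so by the definition of ``parent'' given in the text, $a$ is the unique live neighbor of $b$ at the end of generation $n-1$. Let $k$ be the generation in which $a$ is born. The goal is to show $k = n-1$; note that $k \leq n-1$ is already forced, since $a$ must be alive at the end of generation $n-1$ for $b$ to be born in generation $n$. So it suffices to rule out $k \leq n-2$.

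Assume toward a contradiction that $k \leq n-2$. First I would examine the live neighbors of $b$ at the end of generation $k$. Since $a$ is born in generation $k$, certainly $a$ is alive then. I then claim $a$ is the \emph{only} live neighbor of $b$ at that moment. Indeed, any neighbor of $b$ that is alive at the end of generation $k$ remains alive at the end of generation $n-1$ because cells never die, and by the definition of parent the only such neighbor at the end of generation $n-1$ is $a$. This monotonicity is the heart of the argument.

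Given that $b$ has exactly one live neighbor at the end of generation $k$, the birth rule forces $b$ to be born in generation $k+1$. But $k+1 \leq n-1 < n$, contradicting the assumption that $b$ is born in generation $n$. Hence $k = n-1$, which is what we wanted to show.

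There is no real obstacle here: the only subtle point is making clean use of the fact that live cells never die, which converts the count of live neighbors at any intermediate generation into a lower bound on the count at later generations. Once that observation is in place, the proof is essentially a one-line contradiction.
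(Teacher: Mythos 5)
Your proof is correct. The paper actually states this proposition without any proof, so there is nothing to compare against; your contradiction argument --- if the parent were born in some generation $k \le n-2$, then since live cells never die the child would already have had exactly one live neighbor at the end of generation $k$ and hence would have been born in generation $k+1 < n$ --- is exactly the standard justification and cleanly fills the gap the authors left.
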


Thus, we have a family. And families have family trees. Where are the trees here? It is traditional to draw a family tree as a graph, where parents are vertices and parent-children pairs are connected by an edge. For example, Figure~\ref{fig:square6tree} shows the Square-Ulam-Warburton automaton after 6 generations and the corresponding family tree.

\begin{figure}[htbp]
\centering$
\begin{array}{cc}
\includegraphics[height=45mm]{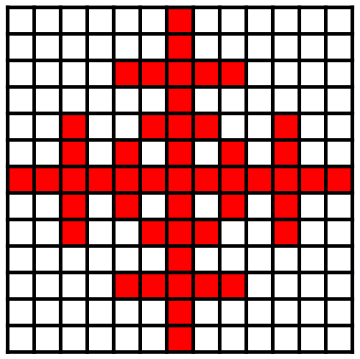} &
\includegraphics[height=45mm]{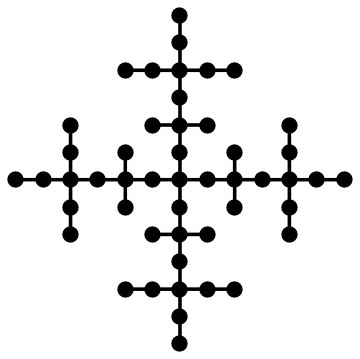}
\end{array}$
\caption{Generations 6 of the Ulam-Warburton automaton and its tree}\label{fig:square6tree}
\end{figure}

Let the sequence of parents from a cell to the patriarch be called its \textit{lineage}. Define an \textit{ancestor} of a cell to be a cell that is part of its lineage. For example, great-great-grandparents are ancestors and are part of a lineage.

By birth rules each cell has exactly one parent. But how many children can a cell have? 

If you look at the Square-Ulam-Warburton automaton, you can see that the patriarch has 4 children, and all other cells have either 0, 1 or 3 children. Clearly, a non-patriarch cannot have more than 3 children because it only has four neighbors with one of them being the parent. This automaton exhibits fractal behavior, which means self-similarity. For fractals in particular, if you only see 0, 1, or 3 children at several initial stages, you can be sure that future cells will not have 2 children. Figure~\ref{fig:UWchildren} color codes the fertility of cells. The parents without children are leaves in a family tree, and they are green in the Figure. Parents with three children are blue, and those with one child are red. The patriarch is black. On the border we colored cells with their potential fertility. Their children will appear in the next generation.

\begin{figure}[htbp]
\centering
\includegraphics[height=50mm]{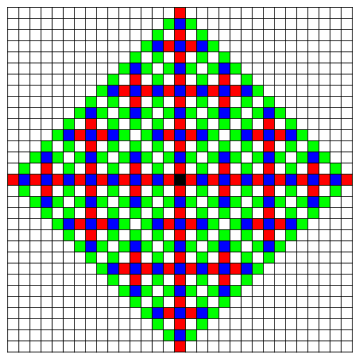}
\caption{Square-UW cells color-coded by the number of children}\label{fig:UWchildren}
\end{figure}

Similarly, the patriarch of the Hex-UW automaton has 6 children, and no other cell can have more than 5 children. In reality, the number of children is either 0, 1, 2, or 3. Figure~\ref{fig:UWHexchildren} color codes the fertility of cells. The parents without children are green. The parents with three children are blue, ones with two children are purple, and those with one child are red. The patriarch is black. On the border we colored cells with their expected fertility.

\begin{figure}[htbp]
\centering
\includegraphics[height=50mm]{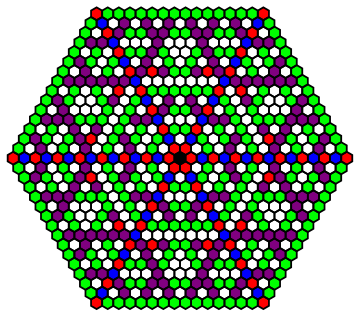}
\caption{Hex-UW cells color-coded by the number of children}\label{fig:UWHexchildren}
\end{figure}

\section{Distance}

We now forget automata for a second, and think about the distance between two cells on a grid. We can define a \textit{geometric distance} between two cells as a distance between their centers. 

But there is another special distance that plays a special role on grids called the \textit{Manhattan distance}. If you are in Manhattan trying to stroll from one place to another then you cannot cut through buildings; you have to use the streets. And streets in Manhattan are similar to a square grid. In the case of the automaton we have an additional constraint. The only move that is allowed is from the center of a cell, to the center of a neighboring cell. This move is considered to cover distance 1. This Manhattan distance between cells $a$ and $b$ is defined as the length of the shortest path that starts at cell $a$, ends at cell $b$, and is restricted as above. On the square grid, a neighbor of a neighbor might be at Manhattan distance 0 or 2. On the hexagonal grid, a neighbor of a neighbor might be at Manhattan distance 0, 1 or 2. The Manhattan distance is related to the geometric distance.

\begin{prop}
Consider the shortest path in a Manhattan sense that connects cells $a$ and $b$. If we follow this path from $a$ to $b$ then at each point in time the geometric distance from $a$ will increase.
\end{prop}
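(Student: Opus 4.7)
The plan is to translate the statement about geometric distance into an inequality about dot products of step vectors. Place $a$ at the origin and let $v_1, v_2, \ldots, v_n$ be the unit-length step vectors of the shortest Manhattan path, so that the position after $k$ steps is $p_k = v_1 + \cdots + v_k$. Since
\[
|p_{k+1}|^2 - |p_k|^2 \;=\; 2\, p_k \cdot v_{k+1} + |v_{k+1}|^2 \;=\; 2\sum_{i \le k} v_i \cdot v_{k+1} + 1,
\]
it is enough to show that $v_i \cdot v_j \ge 0$ for every pair of steps used, since then $|p_{k+1}|^2 \ge |p_k|^2 + 1 > |p_k|^2$.

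Next I would establish this non-negativity by classifying the step directions that can appear on a shortest path. On the square grid, if $b - a = (\Delta x, \Delta y)$ with (WLOG) $\Delta x, \Delta y \ge 0$, then any step in the $-x$ or $-y$ direction would force a later compensating step in the opposite direction, making the path non-shortest. Hence only $(+1,0)$ and $(0,+1)$ occur, and these have pairwise dot products in $\{0,1\}$. On the hex grid, label the six unit neighbor vectors $e_1, \ldots, e_6$ in order of angle. A shortest path cannot use two diametrically opposite vectors $e_i$ and $-e_i$ (they cancel), nor can it use two vectors $120^\circ$ apart such as $e_1$ and $e_3$, because $e_1 + e_3 = e_2$ lets us replace two steps by one. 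Thus every pair of step directions is contained in a closed $60^\circ$ sector, and pairwise dot products are at least $\cos 60^\circ = \tfrac12 > 0$.

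Finally, combining the two steps: in either grid, every $v_i \cdot v_j \ge 0$, so $p_k \cdot v_{k+1} \ge 0$, and the displayed computation yields $|p_{k+1}| > |p_k|$, which is exactly the claim that the geometric distance from $a$ strictly increases at each step.

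The main obstacle is the classification step on the hex grid, where one must carefully rule out the use of directions more than $60^\circ$ apart. The argument hinges on the identity $e_i + e_{i+2} = e_{i+1}$, which is the hexagonal analogue of the square-grid observation that a shortest path never reverses direction.
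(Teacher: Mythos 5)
The paper gives no proof of this proposition; it is stated as an intuitively evident fact (``we never turn back toward the original'') and then used in the pioneer argument. Your proof is correct and fills that gap. Reducing the claim to non-negativity of the pairwise dot products $v_i \cdot v_j$ of the step vectors is the right move, since $|p_{k+1}|^2 - |p_k|^2 = 2\sum_{i\le k} v_i\cdot v_{k+1} + 1$. The classification of admissible directions is also sound: on the square grid a shortest path uses only the (at most two) coordinate directions pointing toward $b$, which are orthogonal or equal, and on the hex grid the identities $e_i + e_{i+3} = 0$ and $e_i + e_{i+2} = e_{i+1}$ rule out any pair of step directions more than $60^\circ$ apart, so all dot products are at least $\tfrac12$. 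The one point worth making explicit is why these exchanges preserve validity: any reordering of a multiset of unit lattice steps is again a walk on the grid with the same endpoints, so deleting a cancelling pair or replacing two steps by their single-step vector sum genuinely yields a shorter path, contradicting minimality. With that noted, your argument in fact proves slightly more than the discrete statement: since $p_k \cdot v_{k+1} \ge 0$, the distance from $a$ increases monotonically even along the interior of each edge of the path.
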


In other words, when we follow the shortest path in a Manhattan sense we never turn back toward the original, we always move away. 

Now let us turn back to our automata. The children are at Manhattan distance 1 from their parents.

\begin{prop}
A cell born in generation $n$ has a Manhattan distance of no more than $n$ from the patriarch.
\end{prop}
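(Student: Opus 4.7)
The plan is to proceed by induction on the generation $n$, leveraging the preceding proposition that tells us a cell born in generation $n$ has a parent born in generation $n-1$, together with the fact that the parent-child relation corresponds to a Manhattan step of length exactly $1$.

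First, I would handle the base case $n=0$: the only cell born in generation $0$ is the patriarch itself, which has Manhattan distance $0$ from itself, so the bound $0 \le 0$ holds trivially. For the inductive step, I would fix $n \ge 1$ and assume that every cell born in generation $k$ for $k \le n-1$ lies at Manhattan distance at most $k$ from the patriarch. Given any cell $c$ born in generation $n$, the previous proposition tells us $c$ has a (unique) parent $p$ born in generation $n-1$, and by the inductive hypothesis $p$ sits at Manhattan distance at most $n-1$ from the patriarch.

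The key geometric observation is that $c$ and $p$ are adjacent on the grid (that is what it means for $c$ to have been born with $p$ as its sole living neighbor), so the Manhattan distance between $c$ and $p$ equals $1$. Concatenating a shortest Manhattan path from the patriarch to $p$ with the single step from $p$ to $c$ produces a valid path from the patriarch to $c$ of length at most $(n-1)+1 = n$, and therefore the Manhattan distance from the patriarch to $c$ is at most $n$. This completes the induction.

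I do not expect any real obstacle here: the argument is the standard triangle-inequality induction, and it works uniformly for both the Square-UW and Hex-UW automata because the only features being used are that parents are born one generation earlier and that parents are Manhattan-adjacent to their children. The one thing I would be mildly careful to state is that the Manhattan distance really does satisfy the triangle inequality on both grids, which is immediate from its definition as a shortest path length.
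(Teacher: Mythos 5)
Your argument is correct and is essentially the paper's proof in inductive clothing: the paper simply observes directly that the lineage of the cell is itself a path of length $n$ from the patriarch (which is exactly the concatenated path your induction constructs step by step), so the shortest path has length at most $n$. No substantive difference.
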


\begin{proof}
The lineage of the cell is a path of length $n$ originating at the patriarch. Thus, the length of the shortest path does not exceed $n$.
\end{proof}

If we go back to Figure~\ref{fig:UWchildren}, we can describe the cells that have 3 children in the Square-UW automaton as cells which are at an even Manhattan distance from the patriarch.

\section{Pioneers and the Sierpinski triangle}

Define a \textit{pioneer} to be a cell born in generation $n$ with a Manhattan distance of $n$ from the patriarch. This means, the lineage of a pioneer represents the shortest path in the Manhattan sense. Pioneers move outward as quickly as possible and never turn back. The parent of a pioneer is also a pioneer. Moreover, all ancestors of a pioneer are pioneers. Pioneers are not afraid to go away and explore new territory.

The set of all pioneers has a relatively simple structure, as we will prove.

\begin{theorem}\label{thm:sierpinski}
The set of all pioneers in one slice of either automaton is exactly equivalent to the Sierpinski gasket.
\end{theorem}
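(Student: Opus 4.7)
The plan is to reduce the pioneer condition, in both automata, to Pascal's recursion modulo $2$, from which the Sierpinski gasket emerges by Lucas's theorem.

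First I would set up coordinates $(i,j)$ on one slice with $i,j \geq 0$, chosen so that the Manhattan distance from the patriarch to $(i,j)$ equals $i+j$. For a Square-UW quadrant this is just the Cartesian coordinates; for a Hex-UW sextant the natural axial coordinates do the job. In both cases a brief geometric check shows that for an interior cell $(i,j)$ with $i,j\ge 1$, the neighbors at Manhattan distance $(i+j)-1$ are exactly $(i-1,j)$ and $(i,j-1)$, and these lie in the same slice; every other neighbor of $(i,j)$ sits at distance $i+j$ or $i+j+1$. For an axis cell $(i,0)$ or $(0,j)$ there is only one inward neighbor, still in the slice, because any cross-boundary neighbor in the adjacent slice happens to sit at distance $i+j$ or $i+j+1$.

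Next I would establish the key local rule: a cell $(i,j)\neq(0,0)$ is a pioneer if and only if exactly one of its in-slice inward neighbors is a pioneer. The forward direction invokes the proposition that a cell born at generation $m$ has Manhattan distance at most $m$: any neighbor of $(i,j)$ alive at generation $n-1=i+j-1$ must therefore be at distance at most $n-1$, hence is inward, and being alive at a generation equal to its own distance makes it a pioneer. The reverse direction uses the same bound in reverse: no outward or same-distance neighbor of $(i,j)$ can possibly be alive by generation $n-1$, so the prescribed single inward pioneer is the unique live neighbor at that moment and $(i,j)$ is born on the very next step, necessarily at the minimal generation $n$.

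With the local rule in hand, a clean induction on $n=i+j$ finishes the job. Writing $P(i,j)=1$ if $(i,j)$ is a pioneer and $0$ otherwise, and declaring $P(-1,j)=P(i,-1)=0$, the local rule becomes
\[
P(i,j) \equiv P(i-1,j)+P(i,j-1) \pmod{2}, \qquad P(0,0)=1.
\]
This is Pascal's triangle modulo $2$, so $P(i,j)\equiv\binom{i+j}{i}\pmod{2}$, which by Lucas's theorem is precisely the characteristic function of the Sierpinski gasket.

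The main obstacle is the local rule in the hex case: unlike on the square grid, a hex cell can have two neighbors at the same Manhattan distance from the patriarch, and these neighbors may even live in an adjacent slice. I need to argue carefully that such same-distance neighbors cannot be alive at generation $n-1$ (by the distance-versus-generation bound again, since sitting at distance $n$ requires being born no earlier than generation $n$), so they neither trigger a premature birth nor block an expected one. Once this is pinned down, the Square-UW case is the easier cousin and both automata fall under a single inductive argument.
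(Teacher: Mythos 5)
Your proposal is correct and follows essentially the same route as the paper's proof: an induction on generation (equivalently on Manhattan distance) showing that a cell at distance $n$ is born at generation $n$ precisely when exactly one of its one or two inward neighbors is a pioneer, because the distance-versus-generation bound rules out live same-distance or outward neighbors at that moment, so the pioneers satisfy Pascal's recurrence modulo $2$ and hence form the gasket. The paper invokes the ``Pascal's triangle modulo 2'' characterization of the gasket directly rather than passing through the closed form $\binom{i+j}{i}$ and Lucas's theorem, and it is less explicit than you are about the hex-grid same-distance and cross-slice neighbors, but the underlying argument is the same.
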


\begin{proof}
We proceed by induction on the generation. For generation 0, the patriarch is a pioneer, and it matches the top cell of the Sierpinski gasket. Assume that all pioneers of generation $n$ correspond to the small triangles in the $(n+1)$-th row of the gasket, that is, they belong to generation $n$ of the gasket.

We now consider all pioneers in the automata that belong to generation $n+1$. The Sierpinski gasket can be characterized as Pascal's triangle modulo 2, so it satisfies a modulo-2 version of Pascal's identity. A cell is part of the Sierpinski gasket if and only if exactly one of the  neighbor cells above it are part of the gasket. By the inductive hypothesis these neighbors are part of the gasket if and only if they are pioneers.

Each cell in one of the automata has 1 or two neighbors that are closer to the patriarch. That means after generation $n$ is born, the cells on the grid that are at Manhattan distance $n+1$ from the patriarch cannot have more than two live neighbors. They are born when exactly one of the neighbors is alive; the automaton corresponding with the Sierpinski gasket follows the same rule.
\end{proof}

We have not only showed that the Sierpinski triangle is a part of both Ulam-Warburton automata, we have also showed that the gasket plays a special role. Its cells are pioneers. We now want to show this on a formulaic level. The description of cells that eventually belong to the Square-UW automaton are known \cite{APS, PSX}. We need a definition to describe it. 

Given an integer $n$, the \textit{2-adic order} of $n$ is the highest power of 2 that divides $n$. For example, the 2-adic order of an odd number is 0. A cell in the square grid will eventually belong to the Square-UW automaton if the coordinates of its center have different 2-adic orders. The cell with the center $(x,y)$ corresponds to the $\binom{x+y}{x}$ in the Pascal's triangle. The corresponding cell belongs to the Sierpinski triangle, when the corresponding binomial coefficient is odd. By Kummer's theorem, the binomial coefficients are odd if and only if summing $x$ and $y$ in binary does not produce a carry. In other words, $x$ and $y$ do not both have a 1 in the same binary unit. It follows that their 2-adic orders are different. Thus the gasket belongs to the square automaton.

\section{Acknowledgements}

We are grateful to Richard Stanley for suggesting this project. We are also grateful to MIT-PRIMES program for sponsoring this research.

\end{document}